\newtheorem{theorem}{Theorem}[section]
\newtheorem{thm}{Theorem}[section]
\newtheorem{prop}[theorem]{Proposition}
\newtheorem{lemma}[theorem]{Lemma}
\newtheorem{remark}{Remark}
\newcommand{\Single}{\rm{Single}}
\newcommand{\stirl}[2]{\genfrac{\{}{\}}{0pt}{}{#1}{#2}}
\def\C{\mathbb{C}}
\def\N{\mathbb{N}}
\numberwithin{equation}{section}
\date{\today}
\def\and{\mathrm{d}}
\newsavebox{\@brx}
\newcommand{\llangle}[1][]{\savebox{\@brx}{\(\m@th{#1\langle}\)}%
  \mathopen{\copy\@brx\kern-0.5\wd\@brx\usebox{\@brx}}}
\newcommand{\rrangle}[1][]{\savebox{\@brx}{\(\m@th{#1\rangle}\)}%
  \mathclose{\copy\@brx\kern-0.5\wd\@brx\usebox{\@brx}}}
\begin{document}

\title[An explicit formula
for Ramanujan's polynomials]{
Proof of an explicit formula for a series from  Ramanujan's Notebooks via tree functions}

\author{Ming-Jian Ding}
\address[Ming-Jian Ding]{School of Mathematic Sciences,
 Dalian University of Technology, Dalian 116024, P. R. China}
\email{ding-mj@hotmail.com}

\author[Jiang Zeng]{Jiang Zeng}
\address[Jiang Zeng]{Univ Lyon, Universit\'e Claude Bernard Lyon 1, CNRS UMR 5208,
 Institut Camille Jordan, 43 blvd. du 11 novembre 1918, F-69622 Villeurbanne cedex, France}
\email{zeng@math.univ-lyon1.fr}
\date{\today}
\begin{abstract}
We prove a recent conjecture, due to Vigren and Dieckmann, about an explicit triple sum formula for a series from Ramanujan's Notebooks.
We shall give two proofs: the first one is by evaluation and based on the
identity
\begin{equation*}
   \sum_{k=0}^\infty \frac{(x+k)^{m+k}}{k!}e^{-u(x+k)} u^k
 = \sum_{j=0}^\infty \sum_{i=0}^{m}\binom{m+j}{i}
 \stirl{m+j-i}{j}x^iu^j,
\end{equation*}
where $\genfrac\{\}{0pt}{}{n}{k}$ is a Stirling number of the second kind, and the second one is combinatorial in nature and by induction.
\end{abstract}

\keywords{Tree function, Ramanujan's Notebook, Stirling numbers of the second kind}
\maketitle

\section{Introduction}

A \emph{labelled tree} of size $n$ is a rooted tree comprising $n$ nodes that are labelled by distinct integers of the set $[n]:=\{1, \ldots, n\}$.
A classical result due to A. Cayley in 1899 states that the number of labelled non-plane trees with $n$ nodes is $n^{n-1}$.
This result can be derived using the combinatorial theory of formal power series as in the following.
If $T(z)$ is  the exponential generating function of such trees,
then it is the power series in $\C[[z]]$ satisfying $T(z)=ze^{T(z)}$, by \emph{inversion}~\cite{Ge16},
the so-called \emph{tree-function} $T(z)$ is given by
\begin{equation}\label{taylor+T+z}
T(z)=\sum_{n= 1}^{\infty}  n^{n-1} \frac{z^n}{n!}.
\end{equation}
Let $u=T(z)$, then $z=ue^{-u}$.
The following are two allied identities~\cite{Ze99, Ge16}:
\begin{align}
\frac{e^{xu}}{1-u}&=\sum_{n=0}^{\infty} (x+n)^n \frac{z^n}{n!},     \label{u1}\\
\frac{e^{xu}-1}{x}&=\sum_{n= 1}^{\infty} (x+n)^{n-1}\frac{z^n}{n!}. \label{u2}
\end{align}
For an arbitrary integer $m$, it is known that the series $\sum_{n\geq 0}
(x+n)^{n+m} \frac{z^n}{n!}$ can be expressed in terms of $u$.
Actually, in Entry 16 of Chapter 3 of his second notebook \cite{BEW83}
Ramanujan studied the numbers $\psi_k(m,x)$ defined by
\begin{equation}\label{ramanujan:formula}
   \sum_{k=0}^{\infty}\frac{(x+k)^{m+k}}{k!}e^{-u(x+k)}u^k
 = \sum_{k=1}^{m+1}\frac{\psi_k(m,x)}{(1-u)^{m+k}}.
\end{equation}
Set $Q_{m,k}(x)=\psi_{k+1}(m-1,x+m)$, which satisfies the recurrence relation~\cite{Ze99}
\begin{align}\label{Q-psi:ramanujan}
Q_{m,k}(x)=(x-k+1)Q_{m-1,k}(x+1)+(m+k-2)Q_{m-1,k-1}(x+1),
\end{align}
where $Q_{1,0}(x)=1$ and $k\in \{0, \ldots, m-1\}$.
Shor~\cite{Sh95} and Dumont and Ramamonjisoa~\cite{DR96}
found combinatorial interpretations for $Q_{m,k}(x)$ when $x=1$ and when $x\in \{-1,0,1\}$, respectively,
Zeng~\cite{Ze99} considered the case of general $x$ and
provided the combinatorial setting in certain sets of rooted labelled trees,
see \cite{CG01,GZ07, LZ14,  ES20, CY21, So23} for further related works.

In a survey of the one-variable Lagrange inversion formula,
Gessel~\cite[Theorem 3.2.5]{Ge16} proved that
there exists a polynomial $R_m(u,x)$, with integer coefficients,
of degree $m$ in $u$ and  $x$, respectively, such that
\begin{equation}\label{gessel:formula:positive}
   \sum_{k=0}^\infty \frac{(x+k)^{m+k}}{k!}e^{-u(x+k)}u^k
 = \frac{R_m(u,x)}{(1-u)^{2m+1}}.
\end{equation}
An alternative proof of Gessel's result goes
as follows.
From \eqref{ramanujan:formula} and \eqref{gessel:formula:positive} we derive
\begin{equation}
  R_{m}(u,x)=\sum_{k=0}^mQ_{m+1,k}(x-m-1)(1-u)^{m-k}.
\end{equation}
By \eqref{Q-psi:ramanujan}
the polynomial $Q_{m,k}(x)$ is of degree $m-k-1$ in $x$. Thus, the highest degree
of $x$ and $u$ must appear in $Q_{m+1,0}(x-m-1)(1-u)^{m}$.
The first three polynomials $R_m(u,x)$ are
\begin{align*}
R_0(u,x)&=1,\\
R_1(u,x)&=x+(1-x)u,\\
R_2(u,x)&=x^2+(1+3x-2x^2)u+(2-3x+x^2)u^2.
\end{align*}
However, no explicit finite formula for the polynomial $R_m(u,x)$ seems to be known for general $m$.
The aim of this note is equivalent to prove an explicit finite triple sum formula for $R_{m}(u,x)$, see Theorem~\ref{conj-VD}, 
which was recently conjectured by Vigren and Dieckmann~\cite{VD22}.

Let us first introduce some definitions.
The 2-\emph{associated Stirling subset number} $\stirl{n}{k}_{\geq 2}$ is the number of partitions of an
$n$-elements set into $k$ blocks,
each of which has at least two elements, see \cite{ES20} and \cite[A008299]{Slo},
it follows that $\stirl{n}{k}_{\geq 2}=0$ if $2k>n$.
By convention $\stirl{0}{k}_{\geq 2}=\delta_{0k}$.
It is easy to see that
\begin{equation*}
\stirl{n}{k}_{\geq 2}=k\stirl{n}{k}_{\geq 2}+(n-1)\stirl{n-2}{k-1}_{\geq 2} \quad \text{for} \quad n\geq 1.
\end{equation*}
The following explicit formula is also known~~\cite[A008299]{Slo}:
\begin{equation}\label{f-nk}
\stirl{n}{k}_{\geq 2}=\sum_{j=0}^k (-1)^j \binom{n}{j}\stirl{n-j}{k-j},
\end{equation}
where the curly brackets are used to denote the Stirling numbers of the second kind.

\begin{thm}[Conjecture of Vigren and Dieckmann~\cite{VD22}]\label{conj-VD}
\begin{align}\label{VD:formula}
   \sum_{k=0}^\infty \frac{(x+k)^{m+k}}{k!}e^{-u(x+k)} u^k
 = \sum_{k=0}^m\sum_{i=0}^{m-k}\binom{m+k}{i}\stirl{m+k-i}{k}_{\geq 2}\,\frac{x^i u^{k}}{(1-u)^{m+k+1}}.
\end{align}
Invoking \eqref{gessel:formula:positive}, this is equivalent to
\begin{equation}\label{iden+r+m}
   R_m(u,x)
 = \sum_{k=0}^m\sum_{i=0}^{m-k}\binom{m+k}{i}
   \stirl{m+k-i}{k}_{\geq 2}\,x^i u^{k} (1-u)^{m-k}.
\end{equation}
\end{thm}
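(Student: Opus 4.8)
The plan is to prove the series identity \eqref{VD:formula}; the polynomial form \eqref{iden+r+m} then follows at once by clearing the denominator in \eqref{gessel:formula:positive}, since $2m+1-(m+k+1)=m-k$. Write $F_m(u,x)$ for the left-hand side of \eqref{VD:formula}, regarded as a formal power series in $u$ whose coefficients are polynomials in $x$. I would split the argument into two independent steps: first prove the evaluation identity advertised in the abstract, namely that the coefficient of $u^j$ in $F_m$ equals $\sum_{i=0}^{m}\binom{m+j}{i}\stirl{m+j-i}{j}x^i$; and then re-sum this power series in closed form to recover the right-hand side of \eqref{VD:formula}.

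For the first step I would simply expand the exponential factor. Since $e^{-u(x+k)}=\sum_{l\ge 0}(-(x+k))^lu^l/l!$, the monomial $u^j$ is produced only by the finitely many pairs $(k,l)$ with $k+l=j$, and after collecting them one finds
\[
[u^j]F_m(u,x)=\frac{1}{j!}\sum_{k=0}^{j}(-1)^{j-k}\binom{j}{k}(x+k)^{m+j}.
\]
This is a $j$-th finite difference of the polynomial $t\mapsto (x+t)^{m+j}$. Expanding $(x+k)^{m+j}$ by the binomial theorem and invoking the classical evaluation $\sum_{k=0}^{j}(-1)^{j-k}\binom{j}{k}k^{M}=j!\stirl{M}{j}$ turns this into $\sum_{i}\binom{m+j}{i}\stirl{m+j-i}{j}x^i$. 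The terms with $i>m$ vanish because $\stirl{m+j-i}{j}=0$ whenever $m+j-i<j$, which gives exactly the evaluation identity with summation range $0\le i\le m$.

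The second step is the substantive one: I must show
\[
\sum_{J\ge 0}\Bigl(\sum_{i=0}^{m}\binom{m+J}{i}\stirl{m+J-i}{J}x^i\Bigr)u^J
=\sum_{k=0}^m\sum_{i=0}^{m-k}\binom{m+k}{i}\stirl{m+k-i}{k}_{\geq 2}\frac{x^iu^k}{(1-u)^{m+k+1}}.
\]
Expanding $u^k/(1-u)^{m+k+1}=\sum_{J\ge k}\binom{m+J}{m+k}u^J$ and comparing the coefficient of $x^iu^J$ on both sides reduces the claim to the purely finite identity
\[
\binom{m+J}{i}\stirl{m+J-i}{J}=\sum_{k}\binom{m+J}{m+k}\binom{m+k}{i}\stirl{m+k-i}{k}_{\geq 2}.
\]
To prove this I would classify a set partition of an $n$-element set into $J$ blocks according to the number $k$ of blocks of size at least two: choosing the $J-k$ singletons and then partitioning the remaining elements into $k$ blocks of size $\ge 2$ yields $\stirl{n}{J}=\sum_k\binom{n}{J-k}\stirl{n-J+k}{k}_{\geq 2}$. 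Taking $n=m+J-i$, multiplying through by $\binom{m+J}{i}$, and applying the elementary identity $\binom{m+J}{i}\binom{m+J-i}{J-k}=\binom{m+J}{m+k}\binom{m+k}{i}$ (both sides equal $(m+J)!/(i!\,(J-k)!\,(m+k-i)!)$) reproduces the displayed identity term by term.

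The main obstacle is this second step: recognizing that the $2$-associated Stirling numbers enter precisely through the singleton/non-singleton dichotomy, and that the geometric denominators $(1-u)^{-(m+k+1)}$ are exactly what is needed to absorb the contributions of the singleton blocks. By contrast, the first step is essentially a finite-difference computation and should be routine. A minor point to settle along the way is the legitimacy of extracting and rearranging coefficients, which is justified by treating all series as formal power series in $u$ with polynomial coefficients in $x$ (equivalently, working in the convergent region arising from the substitution $z=ue^{-u}$ used in \eqref{u1}).
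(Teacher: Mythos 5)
Your proposal is correct, and its first half coincides with the paper's Lemma~\ref{lem+Rama+expre}: both expand the exponential, recognize the coefficient of $u^j$ as a $j$-th finite difference of $(x+t)^{m+j}$, and apply the classical alternating-sum formula for $\stirl{M}{j}$. The second half, however, runs in the opposite direction from the paper's Lemma~\ref{lem+g+expre} and rests on a different key identity. The paper starts from the power series $\sum_{j,i}\binom{m+j}{i}\stirl{m+j-i}{j}x^iu^j$, inserts $1=(1-u)^{m+j+1}(1-u)^{-(m+j+1)}$, reindexes, and then invokes the signed inclusion--exclusion formula \eqref{f-nk} expressing $\stirl{n}{k}_{\geq 2}$ as an alternating sum of ordinary Stirling numbers. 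You instead expand $u^k/(1-u)^{m+k+1}$ as a positive geometric-type series and reduce everything to the convolution $\stirl{n}{J}=\sum_k\binom{n}{J-k}\stirl{n-J+k}{k}_{\geq 2}$, proved by classifying a partition according to its singleton blocks; the two identities are binomial-inversion equivalents of one another, but your route is entirely sign-free and makes the combinatorial role of the $2$-associated Stirling numbers (singleton versus non-singleton blocks) transparent, at the cost of having to verify the product-of-binomials identity $\binom{m+J}{i}\binom{m+J-i}{J-k}=\binom{m+J}{m+k}\binom{m+k}{i}$, which you do correctly. Your deduction of \eqref{iden+r+m} from \eqref{VD:formula} via \eqref{gessel:formula:positive} and the exponent count $2m+1-(m+k+1)=m-k$ is also fine.
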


When $u=0$ and $u=1$, the above formula reduces to
\begin{equation*}
R_m(0,x)=x^m \quad\text{and}\quad
R_m(1,x)=(2m-1)(2m-3)\cdots 3\cdot 1.
\end{equation*}
Recall that the \emph{second-order Eulerian numbers} $\llangle[\big]{m\atop k}\rrangle[\big]$ \cite[A008517]{Slo} are defined by
\begin{equation*}
   \llangle[\bigg]{m\atop k}\rrangle[\bigg]
 = (k+1)\llangle[\bigg]{m-1\atop k}\rrangle[\bigg]+(2m-k-1)\llangle[\bigg]{m-1\atop k-1}\rrangle[\bigg]
\end{equation*}
with $\llangle{0\atop 0}\rrangle=1$ and $0\leq k\leq m$.
When $x=0$, Equation~\eqref{iden+r+m} reduces to
\begin{equation}\label{R0}
  R_m(u,0)=\sum_{k=0}^m
  \stirl{m+k}{k}_{\geq 2}\,u^{k}(1-u)^{m-k}=\sum_{k=0}^{m}
  \llangle[\bigg]{m\atop k}\rrangle[\bigg] u^{k+1},
\end{equation}
where the last equality is equivalent to \cite[Corollary 3]{Smi00} by substituting $u=\frac{\lambda}{1+\lambda}$.
When $x=1$, Carlitz~\cite{Ca65,Smi00} proved that
\begin{equation}\label{R1}
  R_m(u,1)=\sum_{k=0}^{m}
  \llangle[\bigg]{m\atop k}\rrangle[\bigg] u^{k}.
\end{equation}
From \eqref{R0} and \eqref{R1} we obtain
\begin{align}
R_m(u,0)=u\cdot R_m(u,1)\qquad (m\geq 1).
\end{align}

\begin{remark}\rm
A geometric interpretation for  the second equality in \eqref{R0} can be given as in the following.
The number $\stirl{m+k}{k}_{\geq 2}$ counts the $k$-dimensional faces in the tropical Grassmannian of lines $G(2,m+1)$
and the corresponding $h$-vector is the second-order Eulerian numbers $\llangle[\big]{m\atop k}\rrangle[\big]$,
see \cite[Section 4]{SS04} or \cite[A134991]{Slo}.
\end{remark}

We shall present two proofs of Theorem~\ref{conj-VD} in the next two sections.
The first proof uses the explicit formula of the Stirling numbers of the second kind,
and the second proof works by means of a recurrence relation of generating function from a combinatorial model.

\section{First proof of Theorem~\ref{conj-VD}}
\begin{lemma}\label{lem+Rama+expre}
We have
\begin{equation*}
   \sum_{k=0}^\infty \frac{(x+k)^{m+k}}{k!}e^{-u(x+k)} u^k
 = \sum_{j=0}^\infty \sum_{i=0}^{m}\binom{m+j}{i}\begin{Bmatrix}m+j-i\\j\end{Bmatrix}x^iu^j.
\end{equation*}
\end{lemma}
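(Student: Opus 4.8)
The plan is to prove the identity by extracting and comparing the coefficient of $u^j$ on each side, for every $j\ge 0$. The right-hand side is already displayed as a power series in $u$, so its coefficient of $u^j$ is $\sum_{i=0}^{m}\binom{m+j}{i}\stirl{m+j-i}{j}x^i$, and it suffices to show that the left-hand side contributes exactly this.

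First I would expand the exponential factor as $e^{-u(x+k)}=\sum_{l\ge 0}\frac{(-1)^l(x+k)^l}{l!}u^l$ and insert it into the left-hand side, producing a double series in $k$ and $l$ whose general term carries the power $u^{k+l}$. Reindexing with $j=k+l$, using $m+k+l=m+j$ and $\frac{1}{k!\,(j-k)!}=\frac{1}{j!}\binom{j}{k}$, the coefficient of $u^j$ becomes the finite sum
\[
\frac{1}{j!}\sum_{k=0}^{j}(-1)^{j-k}\binom{j}{k}(x+k)^{m+j}.
\]
At the level of formal power series no convergence issue arises: for each fixed $j$ only the finitely many indices $0\le k\le j$ contribute, so the reindexing and interchange of summation are legitimate.

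The key step is to recognize this inner sum as the $j$-th order finite difference of $t\mapsto(x+t)^{m+j}$ evaluated at $t=0$. Expanding $(x+k)^{m+j}=\sum_{i=0}^{m+j}\binom{m+j}{i}x^i k^{m+j-i}$ by the binomial theorem and swapping the two finite sums, I would invoke the classical evaluation $\frac{1}{j!}\sum_{k=0}^{j}(-1)^{j-k}\binom{j}{k}k^n=\stirl{n}{j}$ to rewrite the coefficient of $u^j$ as $\sum_{i=0}^{m+j}\binom{m+j}{i}\stirl{m+j-i}{j}x^i$. Finally, since $\stirl{m+j-i}{j}=0$ whenever $m+j-i<j$, that is whenever $i>m$, the upper summation limit collapses from $m+j$ down to $m$, which matches the right-hand side exactly.

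This argument is essentially a direct computation, so I do not anticipate a serious obstacle; the only points requiring care are the bookkeeping of the double-sum reindexing and the verification that the classical Stirling identity is applied with the correct sign convention. The feature worth emphasizing is the truncation of the $i$-sum via the vanishing $\stirl{m+j-i}{j}=0$ for $i>m$: this is precisely what turns an a priori infinite inner expansion into the finite range $0\le i\le m$ appearing in the stated formula.
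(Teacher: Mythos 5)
Your proposal is correct and follows essentially the same route as the paper's proof: expand $e^{-u(x+k)}$, reindex by $j=k+\ell$, expand $(x+k)^{m+j}$ by the binomial theorem, and apply the explicit formula $\stirl{n}{j}=\frac{1}{j!}\sum_{\ell=0}^{j}(-1)^{j-\ell}\binom{j}{\ell}\ell^{n}$. The only (welcome) difference is that you make explicit the truncation of the $i$-sum from $m+j$ to $m$ via the vanishing of $\stirl{m+j-i}{j}$ for $i>m$, a step the paper performs silently.
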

\begin{proof}

Recall the well-known formula of the Stirling numbers of the second kind
\begin{equation}\label{explicit-stirling-formula}
\genfrac\{\}{0pt}{}{n}{k}
=\frac{1}{k!}\sum_{i=0}^k(-1)^{k-i}\binom{k}{i}i^n.
\end{equation}
Substituting $e^{-u(x+k)}=\sum_{\ell=0}^\infty (-1)^{\ell} (x+k)^{\ell}u^{\ell}/{\ell}!$ and replacing $k+\ell$ by $j$ we have
\begin{align*}
 \sum_{k=0}^\infty \frac{(x+k)^{m+k}}{k!}e^{-u(x+k)} u^k
 &= \sum_{j=0}^\infty \frac{u^j}{j!}\sum_{{\ell}=0}^{j}(-1)^{j-{\ell}}\binom{j}{{\ell}}({\ell}+x)^{m+j}  \\
 &= \sum_{j=0}^\infty \sum_{i=0}^{m}\binom{m+j}{i}\frac{u^j}{j!}\sum_{{\ell}=0}^{j}(-1)^{j-{\ell}}\binom{j}{{\ell}}{\ell}^{m+j-i}x^i \\
 &= \sum_{j=0}^\infty \sum_{i=0}^{m}\binom{m+j}{i}\begin{Bmatrix}m+j-i\\j\end{Bmatrix}x^iu^j,
\end{align*}
where the last equality follows from \eqref{explicit-stirling-formula}.
\end{proof}

For $m\in \N$ introduce the short-hand notation
\begin{align}\label{def+g+Rama}
g_m(u,x):=\sum_{k=0}^m\sum_{i=0}^{m-k}
 \binom{m+k}{i}\stirl{m+k-i}{ k}_{\geq 2}\,\frac{x^i u^{k}}
{(1-u)^{m+k+1}}.
\end{align}

\begin{lemma}\label{lem+g+expre}
We have
\begin{equation*}
   g_m(u,x)
 = \sum_{j=0}^\infty \sum_{i=0}^{m}\binom{m+j}{i}\genfrac\{\}{0pt}{}{m+j-i}{j}x^iu^j.
\end{equation*}
\end{lemma}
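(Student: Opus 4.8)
The plan is to expand each term of $g_m(u,x)$ as a formal power series in $u$ and then match the coefficient of $x^iu^j$ with the one predicted on the right-hand side. First I would invoke the negative binomial expansion
\begin{equation*}
\frac{u^k}{(1-u)^{m+k+1}}=\sum_{j\geq k}\binom{m+j}{m+k}u^j ,
\end{equation*}
which transforms the definition \eqref{def+g+Rama} into
\begin{equation*}
g_m(u,x)=\sum_{j=0}^{\infty}\sum_{i=0}^{m}\Bigg(\sum_{k}\binom{m+k}{i}\binom{m+j}{m+k}\stirl{m+k-i}{k}_{\geq 2}\Bigg)x^iu^j ,
\end{equation*}
where, for fixed $i$ and $j$, the inner index $k$ runs over $0\leq k\leq\min(m-i,j)$ (the bound $k\leq m-i$ being forced by $i\leq m-k$, and $k\leq j$ by the binomial $\binom{m+j}{m+k}$). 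Comparing this with the asserted formula, the lemma reduces to proving, for all admissible $i$ and $j$, the identity
\begin{equation*}
\sum_{k}\binom{m+k}{i}\binom{m+j}{m+k}\stirl{m+k-i}{k}_{\geq 2}=\binom{m+j}{i}\stirl{m+j-i}{j}.
\end{equation*}

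Next I would strip away the dependence on $i$. Using the subset-of-a-subset identity $\binom{m+j}{m+k}\binom{m+k}{i}=\binom{m+j}{i}\binom{m+j-i}{m+k-i}$ and cancelling the common factor $\binom{m+j}{i}$, the claim becomes a statement relating only $2$-associated and ordinary Stirling subset numbers, namely
\begin{equation*}
\sum_{k=0}^{j}\binom{a+j}{a+k}\stirl{a+k}{k}_{\geq 2}=\stirl{a+j}{j},\qquad a:=m-i\geq 0 .
\end{equation*}
Here the upper limit may be taken to be $j$ with no harm, since $\stirl{a+k}{k}_{\geq 2}$ vanishes once $2k>a+k$, i.e. $k>a$, so the extra terms are zero.

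The heart of the matter is this last identity, and I would establish it combinatorially, which I expect to be the only genuinely nontrivial step. Interpret $\stirl{a+j}{j}$ as the number of partitions of $[a+j]$ into $j$ nonempty blocks and classify each such partition according to how many of its blocks are singletons. If exactly $k$ blocks have size at least $2$, then there are $j-k$ singletons; choosing the $j-k$ singleton elements contributes $\binom{a+j}{j-k}=\binom{a+j}{a+k}$, and partitioning the remaining $a+k$ elements into $k$ blocks, each of size at least $2$, contributes $\stirl{a+k}{k}_{\geq 2}$. Summing over $k$ yields the identity and completes the proof. Algebraically, this decomposition is precisely the binomial inverse of the given formula \eqref{f-nk}, so one could alternatively derive it from there rather than by the counting argument.
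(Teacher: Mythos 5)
Your proof is correct, but it runs in the opposite direction from the paper's and rests on a different key identity. The paper starts from the right-hand side, inserts $1=(1-u)^{m+j+1}(1-u)^{-(m+j+1)}$, expands $(1-u)^{m+j+1}=\bigl(1+\tfrac{u}{1-u}\bigr)^{-(m+j+1)}$ as a series in $u/(1-u)$, reindexes, and then invokes the alternating-sum formula \eqref{f-nk} to recognize $\stirl{m+k-i}{k}_{\geq 2}$ in the resulting inner sum. You instead expand the left-hand side \eqref{def+g+Rama} by the negative binomial theorem, compare coefficients of $x^iu^j$, and reduce (via trinomial revision, correctly applied) to the positive-sum identity
\begin{equation*}
\sum_{k=0}^{j}\binom{a+j}{a+k}\stirl{a+k}{k}_{\geq 2}=\stirl{a+j}{j},\qquad a=m-i,
\end{equation*}
which you prove by classifying partitions of $[a+j]$ into $j$ blocks according to the number of non-singleton blocks. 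This identity is, as you note, exactly the binomial inverse of \eqref{f-nk}, so the two proofs are dual to one another: the paper uses the inclusion--exclusion direction as a black box, while you prove the inverse direction from scratch with a transparent counting argument. Your route buys a self-contained combinatorial explanation (it does not need \eqref{f-nk} at all) and avoids manipulating series in $u/(1-u)$; the paper's route is shorter given that \eqref{f-nk} is already quoted. Your bookkeeping of the summation ranges (the bound $k\leq\min(m-i,j)$, the harmless extension to $k\leq j$ since $\stirl{a+k}{k}_{\geq 2}=0$ for $k>a$) is accurate, so there is no gap.
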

\begin{proof}
Writing $1=(1-u)^{m+j+1}\cdot (1-u)^{-(m+j+1)}$ and
\begin{equation*}
     (1-u)^{m+j+1}
   = \left(1+\frac{u}{1-u}\right)^{-(m+j+1)}
 \\= \sum_{\ell=0}^{\infty}(-1)^{\ell} {m+j+l\choose \ell}\left(\frac{u}{1-u}\right)^{\ell},
\end{equation*}
we have
\begin{align*}
     &\quad \sum_{j=0}^{\infty}\sum_{i=0}^{m}\binom{m+j}{i}\begin{Bmatrix}m+j-i\\j\end{Bmatrix}x^i u^{j}\\
     &= \sum_{i=0}^{m}\sum_{j=0}^{\infty}\sum_{\ell=0}^{\infty}(-1)^{\ell}\binom{m+j+\ell}{\ell}
       \left(\frac{u}{1-u}\right)^{\ell}\binom{m+j}{i}
       \genfrac\{\}{0pt}{}{m+j-i}{j}\frac{x^iu^{j}}{(1-u)^{m+j+1}}\\
     &= \sum_{i=0}^{m}\sum_{k=0}^{\infty}\sum_{j=0}^{k}
       (-1)^{k-j} \binom{m+k}{i}\binom{m+k-i}{k-j}\begin{Bmatrix}m+j-i\\j\end{Bmatrix}\frac{x^iu^{k}}{(1-u)^{m+k+1}}.
\end{align*}
where the last equality follows from the substitution $\ell=k-j$ and the identity
\begin{equation*}
   \binom{m+k}{i}\binom{m+k-i}{k-j} = \binom{m+j}{i}\binom{m+k}{k-j}.
\end{equation*}
Substituting $j$ by $k-j$ in the last summation and invoking \eqref{f-nk} we obtain
\begin{equation}\label{sum-k}
  \sum_{k=0}^\infty\sum_{i=0}^{m}\binom{m+k}{i}
  \stirl{m+k-i}{k}_{\geq 2}\frac{x^iu^{k}}{(1-u)^{m+k+1}}.
\end{equation}
Note that  $\stirl{m+k-i}{k}_{\geq 2}$ is zero if $k>m$ or $i > m-k$.
So, the ranges of $k$ and $i$ in \eqref{sum-k} are bounded by $m$ and $m-k$, respectively.
\end{proof}

Combining Lemma~\ref{lem+Rama+expre} and~\ref{lem+g+expre} completes the proof of Theorem~\ref{conj-VD}.

\section{Second proof of Theorem~\ref{conj-VD}}
Let
$$g(m,u,x):= \sum_{k=0}^\infty \frac{(x+k)^{m+k}}{k!}e^{-u(x+k)} u^k.$$
It is easy to verify that  
\begin{equation}\label{lem+rec+g}
g(m, u, x+1)-g(m, u,x)=\frac{\partial}{\partial u} g(m-1, u,x).
\end{equation}
Let 
\begin{equation}\label{pmki}
  p(m,k,i) = \binom{m+k}{i}\stirl{m+k-i}{k}_{\geq 2}.
\end{equation}
Then the polynomial
\begin{equation}\label{poly-pm}
  p_m(u,x) := \sum_{k=0}^{m}\sum_{i=0}^{m-k}p(m,k,i) u^kx^i
\end{equation}
is related to $g_m(u,x)$ (cf.
\eqref{def+g+Rama}) by 
\begin{equation}\label{rel+g+p}
  g_m(u,x) = \frac{1}{(1-u)^{m+1}}p_m\left(\frac{u}{1-u},x\right).
\end{equation}

Thus, to prove Theorem~\ref{conj-VD}, i.e., 
$g(m,u,x)=g_m(u,x)$, it is sufficient to show that  
$g_m(u,x)$ satisfies \eqref{lem+rec+g}, which means, in terms of $p_m(u,x)$,
\begin{equation}\label{pde+p+m}
  p_m(u, x+1)-p_m(u,x)=\biggl[m+(1+u)\frac{\partial}{\partial{u}}\biggr]p_{m-1}(u,x).
\end{equation}
By \eqref{poly-pm},  Equation~\eqref{pde+p+m} is equivalent to
\begin{gather*}
           \sum_{k=0}^m\sum_{i=0}^{m-k}p(m,k,i)u^k((x+1)^i-x^i)\\
         = \sum_{k=0}^{m-1}\sum_{i=0}^{m-k-1}\left(mp(m-1,k,i)   +(k+1)p(m-1,k+1,i)+kp(m-1,k,i)\right)u^k x^i.
\end{gather*}
Extracting  the coefficients of $u^{k}x^{i}$ on both sides, we get
\begin{equation}\label{iden+p+inv}
     \sum_{j=i+1}^{m-k}\binom{j}{i}p(m,k,j)=(k+1)p(m-1,k+1,i)+(m+k)p(m-1,k,i).
\end{equation}

It remains to prove \eqref{iden+p+inv}.
To do this,
we show that both sides of~\eqref{iden+p+inv} are the cardinality of certain set  by double counting.
Let $\Pi(m,k,i)$ denote the set of partitions of $\{1, \ldots, m+k\}$
into $i$ singletons  and $k$ blocks with size at least 2.
It is clear from \eqref{pmki} that  $p(m,k,i)$ is the cardinality of  $\Pi(m,k,i)$. Let $\Pi_0(m,k+1,i)$ denote the set of partitions  of $\{0, 1, \ldots, m+k\}$ into  $i$ singletons and 
$k+1$ blocks of size at least 2, of which one  contains 0, called \emph{0-block}.

\begin{prop}\label{prop+stirling+p}
For nonnegative integers $n$, $k$ and $i$, the two sides of \eqref{iden+p+inv} are the cardinality of  $\Pi_0(m,k+1,i)$.
\end{prop}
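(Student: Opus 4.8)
The plan is to prove the proposition exactly as announced: show that each side of \eqref{iden+p+inv} counts $|\Pi_0(m,k+1,i)|$, the two enumerations corresponding to two different ways of handling the element $0$. For the right-hand side I would peel off $0$ and split $\Pi_0(m,k+1,i)$ according to the size of the $0$-block. If the $0$-block has size exactly $2$, then $0$ is paired with a unique element $a\in\{1,\dots,m+k\}$; there are $m+k$ choices for $a$, and deleting both $0$ and $a$ leaves a partition of an $(m+k-1)$-element set into $i$ singletons and $k$ blocks of size at least $2$. Since the number of such partitions depends only on the cardinality of the underlying set, it equals $p(m-1,k,i)$, so this subcase contributes $(m+k)\,p(m-1,k,i)$. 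If instead the $0$-block has size at least $3$, then removing $0$ leaves a genuine block of size at least $2$, so deletion of $0$ yields a partition of $\{1,\dots,m+k\}$ into $i$ singletons and $k+1$ blocks of size at least $2$, counted by $p(m-1,k+1,i)$; reconstructing the original requires recording into which of the $k+1$ large blocks $0$ is reinserted, which accounts for the factor $k+1$. Adding the two subcases gives $|\Pi_0(m,k+1,i)|=(k+1)p(m-1,k+1,i)+(m+k)p(m-1,k,i)$, the right-hand side.

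For the left-hand side I would instead \emph{dissolve} the entire $0$-block. Given $\pi\in\Pi_0(m,k+1,i)$, write $s+1$ for the size of its $0$-block, so $s\geq 1$. Deleting $0$ and breaking the remaining $s$ members of the $0$-block into singletons produces a partition of $\{1,\dots,m+k\}$ into $j:=i+s$ singletons and the original $k$ blocks of size at least $2$, which is counted by $p(m,k,j)$. To invert this operation one starts from such a partition and chooses which $s=j-i$ of its $j$ singletons are to be reunited with $0$ to reconstitute the $0$-block; this can be done in $\binom{j}{j-i}=\binom{j}{i}$ ways. As $s$ runs over $s\geq 1$, the index $j$ runs from $i+1$ up to $m-k$, the upper bound being forced since the $k$ blocks of size at least $2$ and the $j$ singletons together must fit inside $\{1,\dots,m+k\}$. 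Summing over $j$ therefore yields $|\Pi_0(m,k+1,i)|=\sum_{j=i+1}^{m-k}\binom{j}{i}p(m,k,j)$, the left-hand side, which completes the identity.

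Both enumerations are individually routine once the correct bijections are fixed, so I expect the only delicate points to be bookkeeping rather than conceptual. Specifically, I would take care to justify that the count of partitions depends only on the cardinality of the ground set (so that $\{1,\dots,m+k\}\setminus\{a\}$ may be treated as an $(m+k-1)$-element set), and to verify that the extreme summation indices $j=i+1$ and $j=m-k$ are precisely the ones forced by the size-at-least-$2$ constraint. The genuinely instructive step is the appearance of $\binom{j}{i}$ on the left: after dissolution the $i$ original singletons and the $s$ former $0$-block elements become indistinguishable as singletons, so recovering $\pi$ requires re-separating them, and this choice is exactly what the binomial coefficient records. Reconciling the ``peel off $0$'' viewpoint behind the right-hand side with the ``dissolve the $0$-block'' viewpoint behind the left-hand side is what produces \eqref{iden+p+inv}, and hence, via \eqref{pde+p+m}, the recurrence underlying Theorem~\ref{conj-VD}.
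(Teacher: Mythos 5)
Your proposal is correct and follows essentially the same double-counting argument as the paper: the same case split on whether the $0$-block is a doubleton or has size at least three for the right-hand side, and the same bijection (pairs $(\pi,T)$ with $T$ a $(j-i)$-subset of the singletons of $\pi\in\Pi(m,k,j)$, merged with $0$ to form the $0$-block) for the left-hand side, which you simply describe in the inverse direction. No gaps.
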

\begin{proof}
We construct the partitions in 
$\Pi_0(m,k+1,i)$ as follows:
\begin{itemize}
\item  the 0-block  is a doubleton; clearly we can choose any element in $\{1, \ldots, m+k\}$ to form the douleton with 0,  and
using  the remaining elements to make 
 a partition in $\Pi(m-1,k,i)$, there are $(m+k)p(m-1,k,i)$ ways;
\item  the 0-block  has at least three elements;
from  any partition  in  $\Pi(m-1,k+1,i)$, pick up a non-singleton block, in $k+1$ ways, to make it a 0-block, and the remaing blocks form a partition in $\Pi(m-1,k+1,i)$. 
       Clearly the number of such partitions is $(k+1)p(m-1,k+1,i)$.
\end{itemize}

For the left-hand side of~\eqref{iden+p+inv}, we set up a bijection
$\phi: \Omega\to \Pi_0(m,k+1, i)$, where $\Omega:=\cup_{j>i}\Omega_j$ with
\begin{equation*}
\Omega_j=\{(\pi, T): \pi\in \Pi(m,k,j), \; T \subseteq {\Single(\pi)} \;\text{with}\,\;
|T|=j-i\},
\end{equation*}
where $\Single(\pi)$ is the set of elements in singeltons of $\pi$, as in the following: join 0 to $T$ to make a 0-block $T_0$.
Together with the remaining blocks of $\pi$ we get a partition
$\pi_0$ in $\Pi_0(m,k+1, i)$.
It is clear that the cardinality of $\Omega$ is the left-hand side of \eqref{iden+p+inv}.
\end{proof}

\section*{Acknowledgements}
The first author was supported by the \emph{China Scholarship Council}.
This work was done during his visit  at  Universit\'e Claude Bernard Lyon 1 in 2022-2023.

\end{document}